\newtheorem{thm}{Theorem}[section]
\newtheorem{cor}[thm]{Corollary}
\newtheorem{prop}[thm]{Proposition}
\newtheorem{question}{Question}
\theoremstyle{definition}
\newtheorem{rem}[thm]{Remark}
\newcommand{\R}{\mathbb{R}}
\newcommand{\Q}{\mathbb{Q}}
\newcommand{\scrC}{\mathscr{C}}
\newcommand{\scrfC}{\tilde{\mathscr{C}}}
\newcommand{\I}{I^d_{n,j}}
\newcommand{\E}{E^d_{n,j}}
\newcommand{\bE}{\overline{E}{}}
\newcommand{\fE}{\mathrm{f}E}
\newcommand{\fI}{\mathrm{f}I}
\newcommand{\Top}{\mathrm{Top}}
\newcommand{\id}{\mathrm{id}}
\newcommand{\zero}{\boldsymbol 0}
\numberwithin{equation}{section}
\begin{document}


\baselineskip=17pt



\title{Deloopings of the spaces of long embeddings}

\author[K.~Sakai]{Keiichi Sakai}
\address{Department of Mathematical Sciences, Shinshu University, 3-1-1 Asahi, Matsumoto, Nagano 390-8621, Japan}
\email{ksakai@math.shinshu-u.ac.jp}

\date{}

\begin{abstract}
The homotopy fiber of the inclusion from the long embedding space to the long immersion space is known to be an iterated based loop space (if the codimension is greater than two).
In this paper we deloop the homotopy fiber to obtain the topological Stiefel manifold, combining results of Lashof and of Lees.
We also give a delooping of the long embedding space, which can be regarded as a version of Morlet-Burghelea-Lashof's delooping of the diffeomorphism group of the disk relative to the boundary.
As a corollary, we show that the homotopy fiber is weakly equivalent to a space on which the framed little disks operad acts possibly nontrivially, and hence its rational homology is a (higher) BV-algebra in a stable range of dimensions.
\end{abstract}

\subjclass[2010]{Primary 58D10; Secondary 55P50, 57Q45}

\keywords{the spaces of long embeddings, topological Stiefel manifolds, BV structures, spinning}

\maketitle

\section{Introduction}
Let $E^d=\E$ (resp.\ $I^d=\I$) be the space of {\em long $j$-embeddings} (resp.\ {\em long $j$-immersions}) in $\R^n$, that is, smooth embeddings $f:\R^j\hookrightarrow\R^n$ (resp.\ immersions $\R^j\looparrowright\R^n$) 
such that $f(x)=(x,{\boldsymbol 0})$ if $|x|\ge 1$.
Here ``$d$'' indicates that we are considering differentiable maps.
We also consider the space $\fE^d_{n,j}$ ($\fI^d_{n,j}$) of {\em framed} long embeddings (immersions) $\R^j\times (-\epsilon ,\epsilon )^{n-j}\to\R^n$.
Budney \cite{Budney03} defined an action of little $(j+1)$-disks operad $\scrC_{j+1}$ on (a space equivalent to) $\fE^d_{n,j}$.
Consequently $\fE^d_{n,j}$ ($n-j\ge 3$) turns out to be weakly equivalent to a $(j+1)$-fold loop space by the loop space recognition principle \cite{May271}.
Budney's $\scrC_{j+1}$-action also applies to $\fI^d_{n,j}$ in such a way that the inclusion $\fE^d_{n,j}\to\fI^d_{n,j}$ is a map of $\scrC_{j+1}$-spaces.
Thus the space $\bE^d_{n,j}$, the homotopy fiber of $\fE^d_{n,j}\to\fI^d_{n,j}$ (or equivalently of $E^d_{n,j}\to I^d_{n,j}$), is also a $\scrC_{j+1}$-space and hence a $(j+1)$-fold loop space if $n-j\ge 3$ (this argument is the same as the proof of \cite[Proposition~1.1]{Turchin10}).
Sinha \cite{Sinha04} also proved that $\bE^d_{n,1}$ ($n\ge 4$) is weakly equivalent to a double loop space, using a cosimplicial method.
Based on Sinha's work, Salvatore \cite{Salvatore06} showed that $E^d_{n,1}$ ($n\ge 4$) is weakly equivalent to a double loop space with double loop maps $E^d_{n,1}\to\fE^d_{n,1}$ and $E^d_{n,1}\to\bE^d_{n,1}$.

A natural question is; what is the delooping of $\bE^d_{n,j}$ (and of $\fE^d_{n,j}$)?
Dwyer-Hess \cite{DwyerHess10} and Tourtchine \cite{Tourtchine10} independently described a delooping of $\bE^d_{n,1}$ ($n\ge 4$) as the derived space of maps between some operads.
The purpose of this paper is to give a simple delooping of $\bE^d_{n,j}$ which had already appeared implicitly in Lashof's paper \cite{Lashof76}.

\begin{thm}\label{thm:main}
If $n-j\ge 3$ and $n\ge 5$, then $\bE^d_{n,j}$ is weakly equivalent to the $(j+1)$-fold based loop space of the topological Stiefel manifold $V^t_{n,j}$.
\end{thm}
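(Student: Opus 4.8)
The plan is to reduce the statement, by immersion theory, to a comparison between smooth and topological immersions, and then to identify the resulting homotopy fiber by smoothing theory. First I would invoke the Smale--Hirsch theorem in the smooth category together with Lees' topological immersion theorem to replace the immersion spaces by section spaces: taking derivatives gives natural weak equivalences $\I\simeq\Omega^j V_{n,j}$ and $I^t_{n,j}\simeq\Omega^j V^t_{n,j}$, where $V_{n,j}$ is the ordinary Stiefel manifold, $V^t_{n,j}$ is its topological analogue (the classifying fiber for topological immersion theory), $I^t_{n,j}$ denotes long topological immersions, and these are compatible with the forgetful map $\I\to I^t_{n,j}$. (These h-principles hold in all codimensions.) In particular $\Omega^{j+1}V^t_{n,j}\simeq\Omega\,I^t_{n,j}$, so it suffices to produce a natural weak equivalence $\bE^d_{n,j}\simeq\Omega\,I^t_{n,j}$.

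To this end I would study the square built from the inclusion $\E\hookrightarrow\I$ and the forgetful map $\I\to I^t_{n,j}$. Since $\Omega\,I^t_{n,j}=\operatorname{hofib}(P\,I^t_{n,j}\to I^t_{n,j})$ for the based path fibration, the claim becomes the assertion that the square
\[
\xymatrix{ \E \ar[r] \ar[d] & P\,I^t_{n,j} \ar[d] \\ \I \ar[r] & I^t_{n,j} }
\]
is homotopy cartesian. Defining the top arrow already requires a canonical null-homotopy of the composite $\E\to\I\to I^t_{n,j}$; this is exactly where the codimension hypothesis $n-j\ge 3$ enters, since a smooth long embedding is \emph{unknotted} as a topological embedding and hence canonically trivial as a topological immersion (topological engulfing/unknotting in codimension $\ge 3$, in the spirit of Lees), the space of such trivializations being contractible. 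This yields the comparison map $\bE^d_{n,j}\to\Omega\,I^t_{n,j}$.

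The heart of the proof is that this square is homotopy cartesian, equivalently (comparing the homotopy fibers of the horizontal maps) that $\E\simeq\operatorname{hofib}(\I\to I^t_{n,j})$: a smooth long immersion is, canonically and up to homotopy, a smooth long embedding precisely when it is trivial as a \emph{topological} immersion. This is the Lashof--Lees comparison, and I expect it to be the main obstacle. Both hypotheses are used here: codimension $\ge 3$ kills topological knotting of the complement, while $n\ge 5$ makes smoothing theory available (the Kirby--Siebenmann product structure theorem and ``concordance implies isotopy''), so that the discrepancy between smooth embeddings and smooth immersions is governed entirely by the smooth-versus-topological difference of the frame data measured by $V^t_{n,j}$. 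Granting this identification and combining it with the first paragraph gives $\bE^d_{n,j}\simeq\Omega\,I^t_{n,j}\simeq\Omega^{j+1}V^t_{n,j}$. Finally I would verify that all equivalences are natural at the standard embedding/immersion used as basepoint, so that the iterated loop structures are genuinely matched.
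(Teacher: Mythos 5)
Your proposal is correct and follows essentially the same route as the paper: the content of your homotopy cartesian square is exactly the fibration sequence $E^d_{n,j}\to I^d_{n,j}\to I^t_{n,j}$, which the paper extracts from Lashof's smoothing-theoretic equivalence $E^{t/d}_{n,j}\xrightarrow{\sim}I^{t/d}_{n,j}$ together with the contractibility of the topological long embedding space (Alexander trick), and the final identification $\Omega I^t_{n,j}\simeq\Omega^{j+1}V^t_{n,j}$ is Lees' theorem in both cases. The only cosmetic difference is that the paper organizes the argument as a $3\times 3$ diagram of fibration sequences rather than as a single homotopy pullback square.
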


The {\em topological Stiefel manifold} $V^t_{n,j}$ (``$t$'' suggests that it consists of topological maps) is defined to be the orbit space $\Top (n)/\Top (n,j)$, where $\Top (n,j)$ is the topological group of germs at $\zero$ of homeomorphisms $\R^n\xrightarrow{\approx}\R^n$ which restrict to the identity on $\R^j\times\{ 0\}^{n-j}$, and $\Top (n):=\Top (n,0)$.
Though $V^t_{n,j}$ is not a manifold in the usual sense, we follow the classical terminology.

Since the orthogonal group acts in a nontrivial way on the topological Stiefel manifold, a possibly nontrivial {\em BV-structure} on $H_*(\bE^d_{n,j})$ is deduced by \cite[Example~2.5]{SalvatoreWahl03} in a range of dimensions.

\begin{cor}\label{cor:BV}
If $n-j\ge 3$, $n\ge 5$ and $n\ge 2j+1$, then $\bE^d_{n,j}$ is weakly equivalent to a space on which the framed $(j+1)$-disks operad $\scrfC_{j+1}$ acts in a nontrivial way.
Consequently $H_*(\bE^d_{n,j};\Q )$ is a $\text{BV}_{j+1}$-algebra \cite[Definition~5.2]{SalvatoreWahl03}.
\end{cor}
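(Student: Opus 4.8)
The plan is to prove Corollary~\ref{cor:BV} by combining the weak equivalence of Theorem~\ref{thm:main} with the general machinery of Salvatore--Wahl \cite{SalvatoreWahl03} for producing framed little disks actions out of an $O(j+1)$-action, and then checking that the action is genuinely nontrivial in the stated range. First I would recall the content of \cite[Example~2.5]{SalvatoreWahl03}: if $X$ is a $(j+1)$-fold loop space $\Omega^{j+1}Y$ whose classifying space $Y$ carries an action of the orthogonal group $O(j+1)$ (by unbased maps fixing the basepoint up to the relevant structure), then the induced action on the loop coordinates promotes the $\scrC_{j+1}$-action on $\Omega^{j+1}Y$ to an action of the framed little disks operad $\scrfC_{j+1}$. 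Thus the key input to supply is an $O(j+1)$-action on the delooping $Y=V^t_{n,j}$ that is compatible with the $(j+1)$-fold loop structure furnished by Theorem~\ref{thm:main}.

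The source of this $O(j+1)$-action is the standard linear action of $O(n)$ on $\R^n$, which descends to $V^t_{n,j}=\Top(n)/\Top(n,j)$ since $O(n)$ acts on $\Top(n)$ by conjugation and preserves the subgroup $\Top(n,j)$ up to the decoration needed. More precisely, I would restrict attention to the subgroup $O(n-j)\times O(j)\subset O(n)$, or rather isolate the factor acting on the last $n-j$ coordinates together with a complementary copy, and identify a copy of $O(j+1)$ acting on $V^t_{n,j}$ in a way that on the loop space $\Omega^{j+1}V^t_{n,j}$ restricts to the rotation of the $(j+1)$ loop directions. Here the geometric picture is that the $\scrC_{j+1}$-action of Budney arises from configurations in $\R^{j+1}$ (the $j$ long directions plus one), and the orthogonal group rotating this $\R^{j+1}$ is exactly what \cite{SalvatoreWahl03} requires; one must verify that this rotation is compatible, under the weak equivalence of Theorem~\ref{thm:main}, with the $O(j+1)$-action on $V^t_{n,j}$ coming from the ambient linear action.

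The nontriviality of the action is where the dimension hypotheses $n\ge 5$ and $n\ge 2j+1$ enter, and I expect this to be the main obstacle. In the stable range $n\ge 2j+1$ one can compare the topological Stiefel manifold $V^t_{n,j}$ with its smooth counterpart $V_{n,j}=O(n)/O(n-j)$ and invoke known low-dimensional homotopy of Stiefel manifolds to detect that the $O(j+1)$-action, and in particular the generator corresponding to the BV operator, acts nontrivially on homology. Concretely, the rotation operator should act nontrivially on $H_*(V^t_{n,j};\Q)$ in a range, which by the loop space equivalence translates to a nontrivial BV operator on $H_*(\bE^d_{n,j};\Q)$; establishing this requires either a direct computation of the relevant homology class (for instance the bottom nonvanishing class of the Stiefel manifold, which lies in the range governed by $n\ge 2j+1$) or an appeal to the known nontriviality of the analogous action in the smooth case together with the stable comparison $V_{n,j}\to V^t_{n,j}$. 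The remaining assertion, that $H_*(\bE^d_{n,j};\Q)$ is then a $\text{BV}_{j+1}$-algebra in the sense of \cite[Definition~5.2]{SalvatoreWahl03}, is immediate from the $\scrfC_{j+1}$-action by passing to rational homology, since the homology of a $\scrfC_{j+1}$-space is by definition a $\text{BV}_{j+1}$-algebra.
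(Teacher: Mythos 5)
Your overall skeleton---reduce to Theorem~\ref{thm:main} and then invoke the Salvatore--Wahl machinery \cite[Proposition~2.3, Example~2.5]{SalvatoreWahl03} to upgrade the $\scrC_{j+1}$-action on $\Omega^{j+1}V^t_{n,j}$ to a $\scrfC_{j+1}$-action via a basepoint-preserving $SO(j+1)$-action on the delooping---is the same as the paper's. But the specific action you propose does not work, and this is the crux of the proof. Conjugation by a general element of $O(n)$ does \emph{not} descend to $V^t_{n,j}=\Top(n)/\Top(n,j)$: for $gfg^{-1}$ to be the identity on $\R^j\times\{\zero\}^{n-j}$ whenever $f$ is, one needs $g$ to preserve that subspace, so only the block subgroup $O(j)\times O(n-j)$ acts. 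Worse, your geometric picture---rotating ``the $j$ long directions plus one'' in $\R^{j+1}$---is precisely the action that the paper points out in Remark~\ref{rem:action} \emph{cannot} be extended from $SO(j)$ to $SO(j+1)$ (a rotation mixing a long direction with a normal direction does not preserve $\Top(n,j)$ under conjugation). The paper's actual choice is the opposite one: conjugation by $\id_{\R^j}\oplus SO(n-j)$, i.e.\ rotations entirely in the normal directions $\{\zero\}^j\times\R^{n-j}$, which manifestly preserves $\Top(n,j)$ and the basepoint $[\id_{\R^n}]$; one then restricts to an $SO(j+1)\subset SO(n-j)$, and this containment is exactly where the hypothesis $n\ge 2j+1$ (i.e.\ $n-j\ge j+1$) is used. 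You have therefore misallocated the hypotheses: $n\ge 5$ is consumed by Theorem~\ref{thm:main} (via Lashof), and $n\ge 2j+1$ by the embedding $SO(j+1)\subset SO(n-j)$, not by any nontriviality check.

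The second gap is that you set yourself the task of proving that the BV operator acts nonzero on $H_*(\bE^d_{n,j};\Q)$, e.g.\ by detecting a bottom class of the Stiefel manifold. That is both unnecessary for the statement and, according to the paper itself, open: ``nontrivial'' in the corollary refers to the $SO(j+1)$-action on the space $V^t_{n,j}$ being a nontrivial action (cf.\ the introduction), and Remark~\ref{rem:action} states explicitly that the author does not know whether the resulting BV-operation on $H_*(\bE^d_{n,j})$ is nontrivial. The final step---that a $\scrfC_{j+1}$-action yields a $\text{BV}_{j+1}$-algebra structure on rational homology, transported along the weak equivalence of Theorem~\ref{thm:main}---is the one part you state correctly; it follows from \cite[Theorem~5.4, Example~5.5]{SalvatoreWahl03}.
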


It is well known, though not so frequently mentioned, that $E^d_{n,j}$ ($n-j\ge 3$) is weakly equivalent to a $j$-fold loop space, because $\pi_0E^d_{n,j}$ is a group if $n-j\ge 3$ \cite{Haefliger66} and $\scrC_j$ acts on $E^d_{n,j}$ in a similar fashion to the case of $j$-fold based loop spaces.
We can also describe a delooping of $E^d_{n,j}$.

\begin{prop}\label{prop:deloop_unframed}
If $n-j\ge 3$ and $n\ge 5$, then $E^d_{n,j}$ is weakly equivalent to $\Omega^jV_{n,j}^{t/d}$, where $V^{t/d}_{n,j}$ is the homotopy fiber of the natural inclusion from the (usual) Stiefel manifold $V^d_{n,j}=O(n)/O(n-j)$ to $V^t_{n,j}$.
\end{prop}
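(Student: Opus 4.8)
The plan is to prove Proposition~\ref{prop:deloop_unframed} in parallel with Theorem~\ref{thm:main}, treating the unframed embedding space $E^d_{n,j}$ exactly as the theorem treats the homotopy fiber $\bE^d_{n,j}$, but keeping track of the tangential/framing data that the homotopy fiber throws away. Recall that $\bE^d_{n,j}$ is the homotopy fiber of the inclusion $E^d_{n,j}\to I^d_{n,j}$ into the immersion space, so there is a homotopy fibration sequence
\begin{equation}\label{eq:fibseq}
\bE^d_{n,j}\longrightarrow E^d_{n,j}\longrightarrow I^d_{n,j}.
\end{equation}
By Smale-Hirsch immersion theory the long immersion space $I^d_{n,j}$ is weakly equivalent to the $j$-fold loop space $\Omega^j V^d_{n,j}$ of the ordinary Stiefel manifold $V^d_{n,j}=O(n)/O(n-j)$, since a long immersion is determined up to homotopy by its Gauss map $\R^j\to V^d_{n,j}$ relative to the standard germ at infinity. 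Combined with Theorem~\ref{thm:main}, which identifies $\bE^d_{n,j}\simeq\Omega^{j+1}V^t_{n,j}$, the sequence~\eqref{eq:fibseq} becomes, after looping, a fibration whose fiber and base are $j$-fold loop spaces on $V^t_{n,j}$ and $V^d_{n,j}$ respectively.

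First I would make the identification of the map $E^d_{n,j}\to I^d_{n,j}$ compatible with the loop-space structures, so that~\eqref{eq:fibseq} is (up to weak equivalence) the $j$-fold loop of a single map of spaces
\begin{equation}\label{eq:basemap}
W\longrightarrow V^t_{n,j},\qquad V^d_{n,j}\longrightarrow V^t_{n,j},
\end{equation}
where the second map is the natural inclusion of the smooth Stiefel manifold into the topological one induced by $O(n)\hookrightarrow\Top(n)$ and $O(n-j)\hookrightarrow\Top(n,j)$. The key point is that Theorem~\ref{thm:main}'s delooping of $\bE^d_{n,j}$ as $\Omega^{j+1}V^t_{n,j}=\Omega^j(\Omega V^t_{n,j})$ exhibits $\bE^d_{n,j}$ as $\Omega^j$ of the homotopy fiber of the inclusion $V^d_{n,j}\to V^t_{n,j}$; this is precisely the definition of $V^{t/d}_{n,j}$. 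Thus I would set up a commutative diagram of fibration sequences, one row being~\eqref{eq:fibseq} and the other being the $j$-fold loops of
\begin{equation}\label{eq:stiefelfib}
V^{t/d}_{n,j}\longrightarrow V^d_{n,j}\longrightarrow V^t_{n,j},
\end{equation}
and then invoke the five lemma (on homotopy groups) after establishing that two of the three vertical comparison maps are weak equivalences.

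The main obstacle, and the step requiring the most care, is verifying that the map $E^d_{n,j}\to I^d_{n,j}$ deloops \emph{compatibly} with the constructions of Lashof and Lees used in Theorem~\ref{thm:main}, i.e.\ that the comparison map on base spaces is genuinely the smooth-to-topological inclusion $V^d_{n,j}\to V^t_{n,j}$ of~\eqref{eq:basemap} and not merely abstractly weakly equivalent to it. Concretely, Lashof's analysis produces the topological Stiefel manifold $V^t_{n,j}$ as the relevant classifying object for the difference between smooth embeddings and their underlying topological/immersion data, and one must check that restricting this machinery to the immersion level recovers $V^d_{n,j}$ with the inclusion map intact. I expect this to reduce, by naturality of the Smale-Hirsch and Lashof-Lees equivalences, to comparing the two fibration sequences over a point and matching fibers; once the base and total-space comparison maps are shown to be weak equivalences, Proposition~\ref{prop:deloop_unframed} follows formally by looping~\eqref{eq:stiefelfib} $j$ times and reading off $E^d_{n,j}\simeq\Omega^j V^{t/d}_{n,j}$.
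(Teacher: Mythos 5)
There is a genuine gap here, and also a pointwise error that drives it. The error: you assert that the delooping $\bE^d_{n,j}\simeq\Omega^{j+1}V^t_{n,j}=\Omega^j(\Omega V^t_{n,j})$ ``exhibits $\bE^d_{n,j}$ as $\Omega^j$ of the homotopy fiber of $V^d_{n,j}\to V^t_{n,j}$.'' It does not: $\Omega V^t_{n,j}$ is the homotopy fiber of $\ast\to V^t_{n,j}$, whereas the homotopy fiber of $V^d_{n,j}\to V^t_{n,j}$ is $V^{t/d}_{n,j}$, and the two sit in a fibration sequence $\Omega V^t_{n,j}\to V^{t/d}_{n,j}\to V^d_{n,j}$; they are not equivalent because $V^d_{n,j}=O(n)/O(n-j)$ is not contractible. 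Conflating them amounts to asserting the conclusion of the proposition for $\bE^d_{n,j}$ instead of $E^d_{n,j}$.

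The gap: correctly read, your comparison should be between $\bE^d\to E^d\to I^d$ and $\Omega^{j+1}V^t_{n,j}\to\Omega^jV^{t/d}_{n,j}\to\Omega^jV^d_{n,j}$. But the fiber and the base of a fibration do not determine the total space, so the five lemma cannot be invoked until you construct the middle comparison map $E^d_{n,j}\to\Omega^jV^{t/d}_{n,j}$ making the squares homotopy commute --- and constructing that map \emph{is} the content of the proposition. A point of $\Omega^jV^{t/d}_{n,j}$ is a $j$-fold loop in $V^d_{n,j}$ together with a nullhomotopy of its image in $V^t_{n,j}$; for a long embedding the loop comes from the germ (Smale--Hirsch) data, but the nullhomotopy must come from the contractibility of the topological embedding space $E^t$ (the Alexander trick) together with Lashof's Theorem~\ref{thm:Lashof}, two ingredients your proposal never invokes. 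The paper's proof uses exactly these: Theorem~\ref{thm:Lashof} (the map $E^{t/d}\to I^{t/d}$ is a weak equivalence) and $E^t\simeq\ast$ give $E^d\simeq E^{t/d}\simeq I^{t/d}=\operatorname{hofib}(I^d\to I^t)$, so that $E^d\to I^d\to I^t$ is a fibration sequence up to homotopy; since the Smale--Hirsch and Lees equivalences $I^*_{n,j}\xrightarrow{\sim}\Omega^jV^*_{n,j}$ are both given by taking germs and hence are compatible with the inclusion from $d$ to $t$, the homotopy fiber of $I^d\to I^t$ is $\Omega^j$ of the homotopy fiber of $V^d_{n,j}\to V^t_{n,j}$, i.e.\ $\Omega^jV^{t/d}_{n,j}$. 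If you import these two ingredients to build your middle map, the five lemma becomes unnecessary; without them, your argument does not close.
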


The delooping in Proposition~\ref{prop:deloop_unframed} can be seen as a ``positive codimension version'' of Morlet-Burghelea-Lashof's delooping of the diffeomorphism group ${\rm Diff}(D^n,\partial )=E^d_{n,n}$ of the disk relative to the boundary \cite{BurgheleaLashof74-1, Morlet69};
\begin{equation}\label{eq:MBL}
 {\rm Diff}(D^n,\partial )\sim\Omega^{n+1}(\Top(n)/O(n)).
\end{equation}
Indeed \eqref{eq:MBL} can be written as $E^d_{n,n}\sim\Omega^nV^{t/d}_{n,n}$, since $\Top (n)=V^t_{n,n}$, $O(n)=V^d_{n,n}$, and $O(n)\to\Top (n)\to\Top (n)/O(n)$ is a fiber bundle \cite[Theorem~4.1]{Gleason50} and hence a Serre fibration.

The proof of the following is similar to that of Corollary~\ref{cor:BV}.

\begin{cor}\label{cor:BV_unframed}
If $n-j\ge 3$ and $n\ge 5$, then $E^d_{n,j}$ is weakly equivalent to a space on which $\scrfC_j$ acts in a nontrivial way.
Consequently $H_*(E^d_{n,j};\Q )$ is a $\text{BV}_j$-algebra. 
\end{cor}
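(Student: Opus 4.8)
The plan is to mirror the proof of Corollary~\ref{cor:BV}, lowering the loop degree by one and replacing $V^t_{n,j}$ with $V^{t/d}_{n,j}$. By Proposition~\ref{prop:deloop_unframed} we already have a weak equivalence $E^d_{n,j}\simeq\Omega^jV^{t/d}_{n,j}$, so it suffices to produce a nontrivial $SO(j)$-action on the target $V^{t/d}_{n,j}$ and feed the result into \cite[Example~2.5]{SalvatoreWahl03}.

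First I would make the delooping equivariant. The subgroup $SO(j)\subset SO(n)$ rotating the first $j$ coordinates of $\R^n$ acts by left translation on the smooth Stiefel manifold $V^d_{n,j}=O(n)/O(n-j)$, and, through the inclusion $SO(n)\subset\Top(n)$, on $V^t_{n,j}=\Top(n)/\Top(n,j)$ as well. The inclusion $V^d_{n,j}\hookrightarrow V^t_{n,j}$ is $SO(j)$-equivariant, so its homotopy fiber $V^{t/d}_{n,j}$ inherits a based $SO(j)$-action, with $SO(j)$ acting on $S^j$ as the one-point compactification of the standard representation. The point that needs checking is that the weak equivalence of Proposition~\ref{prop:deloop_unframed} can be chosen $SO(j)$-equivariantly, i.e.\ that it intertwines this left-translation action with the geometric $SO(j)$-action on $E^d_{n,j}$ by conjugation $f\mapsto\rho\circ f\circ\rho^{-1}$ for $\rho\in SO(j)\subset SO(n)$ (which preserves the long condition, since $\rho$ fixes the last $n-j$ coordinates and $|\rho^{-1}x|=|x|$). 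For this I would revisit the construction underlying Proposition~\ref{prop:deloop_unframed}, just as one does for Theorem~\ref{thm:main} in the framed case.

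Granting the equivariant model, \cite[Example~2.5]{SalvatoreWahl03} applies verbatim: for any $SO(j)$-space $Y$ the mapping space $\Omega^jY=\mathrm{Map}_*(S^j,Y)$ carries a $\scrfC_j$-action, in which the underlying $\scrC_j$-structure is the usual $j$-fold loop structure and the framings come from combining the rotation action on $S^j$ with the $SO(j)$-action on $Y$. Taking $Y=V^{t/d}_{n,j}$ yields a $\scrfC_j$-space weakly equivalent to $E^d_{n,j}$, and since $H_*(\scrfC_j;\Q)=\mathrm{BV}_j$ the rational homology $H_*(E^d_{n,j};\Q)$ becomes a $\mathrm{BV}_j$-algebra in the sense of \cite[Definition~5.2]{SalvatoreWahl03}.

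The step I expect to be the main obstacle is the word \emph{nontrivial}: one must show the $SO(j)$-action on $V^{t/d}_{n,j}$ is homotopically nontrivial, so that the framing part of the $\scrfC_j$-action does not degenerate to the trivial one. Since $V^{t/d}_{n,j}$ is highly connected in the range $n-j\ge 3$, $n\ge 5$ (it measures the difference between the smooth and topological Stiefel manifolds, and $\Top(n)/O(n)$ is highly connected), the action is only visible on its first nonvanishing homotopy group, and I would analyze it there through the $SO(j)$-equivariant fibration $V^{t/d}_{n,j}\to V^d_{n,j}\to V^t_{n,j}$, comparing via the connecting map and the manifestly nontrivial rotation action on the real Stiefel manifold $V^d_{n,j}$. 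This is the analogue of the range estimate used for Corollary~\ref{cor:BV}; here no hypothesis $n\ge 2j+1$ is required, which suggests that the nontriviality is already detected from the smooth term of the fibration and does not need the stable range that the purely topological space $V^t_{n,j}$ forced in the framed case.
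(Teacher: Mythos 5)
Your overall strategy---combine Proposition~\ref{prop:deloop_unframed} with an $SO(j)$-action on $V^{t/d}_{n,j}$ and invoke \cite[Example~2.5]{SalvatoreWahl03}---is indeed the paper's strategy, but your choice of action is wrong, and the error is not cosmetic. The Salvatore--Wahl construction requires a \emph{basepoint-preserving} action of $SO(j)$ on the delooping, and left translation by $SO(j)\subset SO(n)$ does not preserve the basepoint of $V^t_{n,j}=\Top (n)/\Top (n,j)$: the basepoint is the coset of $\id_{\R^n}$, and for nontrivial $g\in SO(j)$ the coset $g\,\Top (n,j)$ is a different point, since $g$ does not restrict to the identity on $\R^j\times\{ 0\}^{n-j}$. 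The same failure occurs on $V^d_{n,j}=O(n)/O(n-j)$, and consequently left translation induces no action at all on the homotopy fiber $V^{t/d}_{n,j}$ taken over the basepoint. It is also not the action that corresponds, under the Smale--Hirsch/Lees equivalences, to the geometric conjugation $f\mapsto\rho\circ f\circ\rho^{-1}$ on $E^d_{n,j}$, so the equivariance you hope to arrange in your second paragraph cannot be arranged for this action.

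The paper's fix (see Remark~\ref{rem:action}) is to regard $V^*_{n,j}$ as the space of germs at $\zero$ of embeddings $(\R^j,\zero )\hookrightarrow (\R^n,\zero )$ and to let $SO(j)$ act by \emph{conjugation}, $(g,f)\mapsto (g\oplus\id_{\R^{n-j}})\circ f\circ g^{-1}$. This fixes the germ of the standard inclusion, makes $V^d_{n,j}\to V^t_{n,j}$ equivariant, hence yields a based $SO(j)$-action on $V^{t/d}_{n,j}$, and it is precisely the action matching the conjugation action on long immersions and embeddings that you describe on the geometric side; with this substitution the rest of your argument goes through and agrees with the paper's. Finally, your last paragraph overreads the word ``nontrivial'': the claim is only that the $\scrfC_j$-action being used does not arise as the trivial extension of the $\scrC_j$-action --- the paper explicitly leaves open whether the resulting BV-operations are homotopically nontrivial (Remark~\ref{rem:action}) --- so no connectivity analysis of $V^{t/d}_{n,j}$ is required and none is given.
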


Proposition~\ref{prop:deloop_unframed} gives rise to an alternative proof of the useful fact which was proved in \cite[Proposition~3.9~(1)]{Budney08} by means of a spinning method (in a wider range of dimensions).
In fact the isomorphism in Corollary~\ref{cor:spinning} below coincides with that given in \cite{Budney08}.

\begin{cor}[\cite{Budney08}]\label{cor:spinning}
If $n-j\ge 3$ and $n\ge 5$, then $\pi_kE^d_{n,j}\cong\pi_0E^d_{n+k,j+k}$ for $k\le 2(n-j)-5$.
\end{cor}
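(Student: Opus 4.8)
The plan is to feed Proposition~\ref{prop:deloop_unframed} into homotopy groups and thereby reduce the statement to a stability property of the topological Stiefel manifolds. By Proposition~\ref{prop:deloop_unframed} we have $E^d_{n,j}\simeq\Omega^jV^{t/d}_{n,j}$ for every admissible pair, so that $\pi_kE^d_{n,j}\cong\pi_{j+k}(V^{t/d}_{n,j})$ and $\pi_0E^d_{n+k,j+k}\cong\pi_{j+k}(V^{t/d}_{n+k,j+k})$ (the hypotheses $(n+k)-(j+k)=n-j\ge 3$ and $n+k\ge 5$ being inherited). The two sides now live in the \emph{same} degree $j+k$, and the codimension $n-j$ is preserved under $(n,j)\mapsto(n+1,j+1)$; hence it suffices to establish an isomorphism $\pi_{j+k}(V^{t/d}_{n,j})\cong\pi_{j+k}(V^{t/d}_{n+k,j+k})$ in the stated range.

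Next I would construct the stabilization (spinning) map. The inclusions $O(n)\hookrightarrow O(n+1)$ and $\Top(n)\hookrightarrow\Top(n+1)$, obtained by crossing a germ with the identity of one extra coordinate, are compatible with the maps $O(n-j)\to O(n-j)$ and $\Top(n,j)\hookrightarrow\Top(n+1,j+1)$ precisely because $(n+1)-(j+1)=n-j$. They therefore induce a map of the defining fibrations $V^{t/d}_{n,j}\to V^d_{n,j}\to V^t_{n,j}$, and on homotopy fibers a map $\sigma\colon V^{t/d}_{n,j}\to V^{t/d}_{n+1,j+1}$. Tracing this construction back through Proposition~\ref{prop:deloop_unframed} one checks that the map $\sigma$ induces on homotopy groups is exactly Budney's spinning; this is what guarantees that the resulting isomorphism coincides with the one in \cite{Budney08}. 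Iterating, the composite $V^{t/d}_{n,j}\to V^{t/d}_{n+k,j+k}$ realizes $k$-fold spinning.

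The heart of the matter is then a metastable stability estimate: the spinning map must induce isomorphisms on $\pi_{j+k}$ throughout a range of degrees governed by the codimension $c=n-j$, and iterating $k$ times (each stage has codimension $c$ and is at least as stable as the first) then yields the claim. Comparing the long exact sequences of $V^{t/d}\to V^d\to V^t$ for the pairs $(n,j)$ and $(n+1,j+1)$, one reduces this to two inputs: the stability of the smooth Stiefel manifold $V^d_{n,j}$, which is classical and holds in the metastable range through the fibration $V^d_{n-1,j-1}\to V^d_{n,j}\to S^{n-1}$ and standard metastable computations; and the corresponding stability of the topological Stiefel manifold $V^t_{n,j}=\Top(n)/\Top(n,j)$. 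The bound $k\le 2(n-j)-5$ is exactly the resulting metastable range.

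The main obstacle is this last ingredient, namely controlling $V^t_{n,j}$, for which the geometric tools of the smooth case are unavailable. The point is that one only needs the behaviour of the \emph{difference} $V^{t/d}_{n,j}$ between the smooth and topological frames in codimension $c$, which is precisely the object treated implicitly by Lashof \cite{Lashof76}; its metastable connectivity is what produces the constant $2(n-j)-5$ and must be matched against the known stable range for $\Top(n)/O(n)$ so that the comparison of fibrations does not lose it. Once this connectivity estimate is secured, the five-lemma applied to the two long exact sequences gives the isomorphism $\pi_{j+k}(V^{t/d}_{n,j})\cong\pi_{j+k}(V^{t/d}_{n+k,j+k})$, and hence $\pi_kE^d_{n,j}\cong\pi_0E^d_{n+k,j+k}$, in the range $k\le 2(n-j)-5$.
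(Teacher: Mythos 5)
Your opening reduction is correct and is in fact a pleasant reformulation: applying Proposition~\ref{prop:deloop_unframed} to both $(n,j)$ and $(n+k,j+k)$ converts the statement into the single claim $\pi_{j+k}(V^{t/d}_{n,j})\cong\pi_{j+k}(V^{t/d}_{n+k,j+k})$, and the stabilization map $V^{t/d}_{n,j}\to V^{t/d}_{n+1,j+1}$ you describe does exist. The gap is in how you propose to prove this stability. You reduce it, via the five lemma on the long exact sequences of $V^{t/d}\to V^d\to V^t$, to ``the stability of the smooth Stiefel manifold $V^d_{n,j}$ in the metastable range'' together with the corresponding statement for $V^t_{n,j}$. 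Neither input is true. The fibration $V^d_{n,j}\to V^d_{n+1,j+1}\to S^{n}$ shows that $\pi_m V^d_{n,j}\to\pi_m V^d_{n+1,j+1}$ is an isomorphism only for $m\le n-2$, the stable range for adding one frame vector, not for $m\le 2n-j-5$; since $n-j\ge 3$ the required degree $m=j+k$ reaches $2n-j-5=n+(n-j)-5\ge n-2$, and it strictly exceeds the stable range once $k\ge n-j-1$. Concretely, for $j=1$ and $n$ even the map $\pi_{n-1}(V^d_{n,1})=\pi_{n-1}(S^{n-1})=\mathbb{Z}\to\pi_{n-1}(V^d_{n+1,2})=\mathbb{Z}/2$ is not injective, yet $m=n-1\le 2n-6$ lies inside the range you need. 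The same failure propagates to $V^t$, which agrees with $V^d$ through a range. Only the \emph{relative} object $V^{t/d}$ stabilizes metastably, and that is exactly the nontrivial content which cannot be extracted from the absolute Stiefel manifolds by a five lemma; as written, the central claim of your proof is asserted but not established.

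The paper imports this missing ingredient rather than proving it: Lashof's Proposition~({\it t/d}), resting on Haefliger--Millett, gives $\pi_m(V^{t/d}_{n,j})\cong\pi_{m+1}(G,O,G_{n-j})$ for $n-j\ge 3$ and $m\le 2n-j-5$, a group depending only on the codimension $n-j$ --- this is precisely where the constant $2(n-j)-5$ comes from. (The paper then finishes with Haefliger's classification $\pi_{m+1}(G,O,G_q)\cong\pi_0E^d_{m+q,m}$ in place of your second application of Proposition~\ref{prop:deloop_unframed}; that difference is cosmetic.) If you replace your five-lemma step by a citation of Lashof's computation, your argument closes; the route you actually propose to the stability isomorphism does not.
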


Here we mention some possible advantages of our delooping of $\bE^d$.

First, we might be able to describe the (co)homology of $\bE^d$ in terms of that of $V^t$, and possibly BV-algebra structure from Corollaries~\ref{cor:BV} and \ref{cor:BV_unframed} might produce new homology classes of $\bE^d$ (see also \S\ref{s:Q}, Question~\ref{q3}).
The author indeed proved in \cite{K08, K09} that the Browder operation induced by Budney's $\scrC_2$-action \cite{Budney03} yields a nontrivial homology class of $\fE^d_{n,1}$ for odd $n\ge 3$ (see \cite{Longoni04} for a similar result).
In fact this homology class can also be obtained by using the BV-operator introduced in \cite{K10} arising from Hatcher's cycle \cite{Hatcher99}.
It would be an interesting question to determine, using $H_*(V^t)$, the generating set of $H_*(\bE^d)$ as a BV-algebra.

Second, the proof of Corollary~\ref{cor:spinning} does not require the celebrated ``Goodwillie calculus'' as in \cite{Budney08}.
Instead we need the knowledge of the homotopy groups of $V^t_{n,j}$ \cite{Lashof76, Millett75}.
So far many interesting results on the (homology of) embedding spaces have been obtained by means of Goodwillie calculus (see for example \cite{AroneLambrechtsVolic07, LambrechtsTurchinVolic10} and the papers already referred above).
Perhaps we might be able to give alternative proofs for some of these results using $V^t_{n,j}$ as in Corollary~\ref{cor:spinning}, and if this is the case, it would be curious to compare these two methods.

In \S\ref{s:proof} we prove the above results.
In \S\ref{s:Q} some related questions are listed.

\section*{Acknowledgments}
The author deeply feels grateful to Masamichi Takase for communicating the presence of Lashof's paper and for teaching him the essential points of it, and to Victor Turchin for his careful reading of the draft of this paper and for many valuable comments.
The author also thanks Ryan Budney and Tadayuki Watanabe for fruitful discussions.
The author is partially supported by JSPS KAKENHI Grant numbers 23840015, 25800038.

\section{Proofs}\label{s:proof}
Let $E^t=E^t_{n,j}$ and $I^t=I^t_{n,j}$ be the spaces of locally flat topological long embeddings and immersions $\R^j\to\R^n$ respectively.
Let $E^{t/d}=E^{t/d}_{n,j}$ and $I^{t/d}=I^{t/d}_{n,j}$ be the homotopy fibers of the inclusions $E^d\to E^t$ and $I^d\to I^t$ respectively.

\begin{thm}[{\cite[Theorem A ({\it t/d})]{Lashof76}}]\label{thm:Lashof}
If $n-j\ge 3$ and $n\ge 5$, then the map $E^{t/d}_{n,j}\to I^{t/d}_{n,j}$ is a weak homotopy equivalence.
\end{thm}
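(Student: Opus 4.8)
The plan is to reduce both homotopy fibers to the same tangential (bundle-theoretic) datum, exploiting the principle basic to smoothing theory that the discrepancy between the topological and smooth categories is \emph{local} along the submanifold, where an embedding and an immersion are indistinguishable. First I would dispose of the immersion side, which is the easy half. By the smooth immersion theorem of Hirsch and its topological counterpart due to Lees, differentiation (passing to the microbundle monomorphism induced on tangent bundles) identifies $I^d_{n,j}$ and $I^t_{n,j}$, up to weak equivalence, with the corresponding spaces of formal immersions: sections over $\R^j$, rel the standard data outside the unit disk, of the bundles of smooth, respectively topological, injective maps $T\R^j\to T\R^n$. The space of such injective maps deformation retracts onto $V^d_{n,j}$, respectively $V^t_{n,j}$, so passing to the homotopy fiber of the forgetful map between these section spaces identifies $I^{t/d}_{n,j}$ with a section space whose fiber is the homotopy fiber of $V^d_{n,j}\to V^t_{n,j}$, namely $V^{t/d}_{n,j}$; over the contractible base $\R^j$ (rel infinity) this gives $I^{t/d}_{n,j}\simeq\Omega^jV^{t/d}_{n,j}$.

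Next I would argue that $E^{t/d}_{n,j}$ is governed by the identical datum, which is the real content. Using the concordance-smoothing and product-structure theorems of Kirby--Siebenmann and Burghelea--Lashof \cite{BurgheleaLashof74-1, Morlet69} (valid for $n\ge 5$), the space of smoothings of a fixed locally flat topological embedding, relative to the standard structure at infinity, becomes a lifting problem along $BO\to B\Top$ for the topological normal microbundle. In codimension $n-j\ge 3$ a general-position (engulfing) argument shows that the global injectivity of the embedding imposes no further obstruction, so the smoothing problem sees only the normal data along $\R^j$ --- exactly the local datum that appeared for immersions. This identifies $E^{t/d}_{n,j}$ with the same model $\Omega^jV^{t/d}_{n,j}$. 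Naturality of the comparison is then automatic: the inclusion $E\hookrightarrow I$ commutes with differentiation and is the identity on tangent-bundle monomorphisms, so under these identifications the induced map $E^{t/d}_{n,j}\to I^{t/d}_{n,j}$ is the identity on $\Omega^jV^{t/d}_{n,j}$, and is therefore a weak homotopy equivalence.

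The hard part will be the embedding step: making rigorous the assertion that in codimension at least three the smoothing of an embedding is a purely normal, tangential phenomenon, insensitive to the embedding's global geometry. This is precisely the content of Lashof's relative smoothing theory for embeddings \cite{Lashof76}, and it is where high-dimensional topology is unavoidable --- the product-structure theorem forces $n\ge 5$, while the collapse of the embedding/immersion distinction at the tangential level rests on the codimension-$\ge 3$ general-position machinery. In lower codimension, knotting phenomena would produce genuinely global smoothing obstructions, the fiber $E^{t/d}_{n,j}$ would no longer coincide with $I^{t/d}_{n,j}$, and the dimension hypotheses could not be dropped.
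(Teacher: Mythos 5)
The first thing to say is that the paper does not prove this statement at all: it is imported verbatim as Lashof's Theorem A (\emph{t/d}) from \cite{Lashof76}, and the only accompanying content is a remark on a point-set issue --- Lashof works with simplicial sets of locally flat topological embeddings and immersions, and identifying these with the singular complexes of the mapping spaces $E^t_{n,j}$, $I^t_{n,j}$ requires \v{C}ernavski\u{\i}'s theorem \cite{Cernavskii69}, which is one place the hypotheses $n-j\ge 3$, $n\ge 5$ enter. Your proposal omits this translation entirely, so even granting everything else, you would have proved a statement about simplicial sets of locally flat embeddings rather than about the spaces as defined here.

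More seriously, your sketch is circular at its core. The immersion half ($I^{t/d}\simeq\Omega^jV^{t/d}_{n,j}$ via Hirsch and Lees) is fine and is essentially what the paper itself uses later in the proof of Proposition~\ref{prop:deloop_unframed}. But the embedding half rests on the assertion that ``the global injectivity of the embedding imposes no further obstruction, so the smoothing problem sees only the normal data along $\R^j$'' --- and that assertion \emph{is} the theorem. Saying that $E^{t/d}$ and $I^{t/d}$ are both computed by the same local tangential datum is just a restatement of the claim that $E^{t/d}\to I^{t/d}$ is a weak equivalence; you acknowledge as much when you defer the ``hard part'' to Lashof's relative smoothing theory, i.e.\ to the very result being proved. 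A genuine proof has to supply the handle-by-handle induction (in the style of Gromov--Haefliger--Po\'enaru immersion theory) together with the codimension-$\ge 3$ unknotting/concordance-implies-isotopy input that Lashof actually uses; your appeal to ``a general-position (engulfing) argument'' is a placeholder for all of that. One further technical caveat: phrasing the smoothing problem as a lifting of ``the topological normal microbundle'' along $BO\to B\Top$ is not safe, since locally flat topological embeddings need not admit (unique) normal microbundles; Lashof's argument is structured precisely to avoid this. As a roadmap of where the difficulty lives your write-up is accurate, but as a proof it reduces the statement to itself.
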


\begin{rem}
Theorem~\ref{thm:Lashof} was stated in \cite{Lashof76} in terms of simplicial sets.
As mentioned in \cite[Appendix]{Lashof76}, by a work of \v{C}ernavski\u{\i} \cite{Cernavskii69}, the simplicial sets of locally flat topological embeddings or immersions used in \cite{Lashof76} are homotopy equivalent to the singular complexes of our space $E^t_{n,j}$ or $I^t_{n,j}$ if the conditions on $n$ and $j$ are satisfied.
Therefore we always assume $n-j\ge 3$ and $n\ge 5$ throughout this paper.
\end{rem}

\begin{proof}[Proof of Theorem~\ref{thm:main}]
Consider the following commutative diagram consisting of six fibration sequences;
\begin{equation}\label{eq:diagram}
\begin{split}
 \xymatrix{
  \bE^{t/d}\ar[r]\ar[d]          & \bE^d\ar[r]\ar[d] & \bE^t\ar[d] & \\
  E^{t/d}\ar[r]\ar[d]^-{\rm (a)} & E^d\ar[r]\ar[d]   & E^t\ar[d]\ar[r]^-{\simeq} & {\ast} \\
  I^{t/d}\ar[r]                  & I^d\ar[r]         & I^t &
 }
\end{split}
\end{equation}
where $\bE^*$ denotes the homotopy fiber of $E^*\to I^*$, $*=d$, $t$, $t/d$.
Since by Theorem~\ref{thm:Lashof} the map (a) in \eqref{eq:diagram} is a weak equivalence, $\bE^{t/d}$ is weakly contractible and hence $\bE^d\to\bE^t$ is a weak equivalence.

On the other hand, since $E^t$ is contractible by the Alexander trick (\cite[p.~146, Example]{Lashof76}), $\Omega I^t\to\bE^t$ is a homotopy equivalence.
Theorem~\ref{thm:main} follows from Lees' topological Smale-Hirsch theorem $I^t_{n,j}\xrightarrow{\sim}\Omega^jV^t_{n,j}$ \cite{Lees69}.
\end{proof}

\begin{rem}
In fact Lees' theorem \cite{Lees69} asserts that there exists a weak equivalence from the space $\fI^t_{n,j}$ of topological framed long immersions $\R^j\times (-\epsilon ,\epsilon )^{n-j}\looparrowright\R^n$ to $\Omega^j\Top (n)$, which equivalence fits into the following diagram of fibration sequences;
\[
 \xymatrix{
  \Omega^j\Top (n,j)\ar[r]\ar@{=}[d] & \fI^t_{n,j}\ar[r]\ar[d]_-{\sim}^-{\text{Lees}} & I^t_{n,j}\ar[d] \\
  \Omega^j\Top (n,j)\ar[r]           & \Omega^j\Top (n)\ar[r]                         & \Omega^jV^t_{n,j}
 }
\]
Thus we have $I^t_{n,j}\xrightarrow{\sim}\Omega^jV^t_{n,j}$ (on the component containing the base point).
\end{rem}

\begin{rem}\label{rem:t=pl}
The above proof works even if the spaces of topological maps are replaced by those of piecewise-linear (PL) maps.
In this case the proof relies on Haefliger-Poenaru's theorem $I^{PL}_{n,j}\xrightarrow{\sim}\Omega^jV^{PL}_{n,j}$ \cite{HaefligerPoenaru64} (the space of locally flat PL immersions and the PL Stiefel manifold $V^{PL}$ are defined analogously to the topological case).
If $n-j\ge 3$, then $V^{PL}_{n,j}\to V^t_{n,j}$ is a homotopy equivalence by \cite[Proposition~({\it t/pl})]{Lashof76}, and hence $V^{PL}$ may replace $V^t$.
\end{rem}

\begin{proof}[Proof of Proposition~\ref{prop:deloop_unframed}]
As noted in \cite[p.~146, Example]{Lashof76}, there is a fibration sequence
\[
 E^d\to I^d\to I^t.
\]
This is because we can deduce a weak equivalence $E^d\xrightarrow{\sim}I^{t/d}$ which makes \eqref{eq:diagram} homotopy commutative, using Theorem~\ref{thm:Lashof} and the fact that $E^t$ is contractible.

The weak equivalences $I^*_{n,j}\to\Omega^jV^*_{n,j}$ ($*=d,t$) are both given as taking the germs of the long immersions.
Thus $I^d_{n,j}\to I^t_{n,j}$ is equivalent to the $j$-fold loop map of $V^d_{n,j}\to V^t_{n,j}$ and hence the fiber $E^d$ is equivalent to $\Omega^jV^{t/d}_{n,j}$.
\end{proof}

\begin{proof}[Proof of Corollary~\ref{cor:BV}]
As explained in \cite[Proposition~2.3, Example~2.5]{SalvatoreWahl03}, the $\scrC_{j+1}$-action on a $(j+1)$-fold based loop space $\Omega^{j+1}X$ can be extended to that of $\scrfC_{j+1}$ by using a basepoint-preserving action of $SO(j+1)$ on $X$ ($\Omega^{j+1}X$ is then acted on by $SO(j+1)$ by the conjugation).
Thus we need a (nontrivial) action of $SO(j+1)$ on $V^t_{n,j}$ which preserves the basepoint; the orbit of $\id_{\R^n}$.
One of the easiest choices is the restriction of the conjugation of $SO(n-j)=\id_{\R^j}\oplus SO(n-j)$ on $\Top (n)$ (see Remark~\ref{rem:action} below), which descends to a basepoint-preserving action on $V^t_{n,j}$ since it preserves $\Top (n,j)$.
If $n-j\ge j+1$, this action restricts to that of $SO(j+1)$.

The $\text{BV}_{j+1}$-structure on $H_*(\Omega^{j+1}V^t_{n,j};\Q )$ is a consequence of the $\scrfC_{j+1}$-action \cite[Theorem~5.4, Example~5.5]{SalvatoreWahl03}.
The weak equivalence $\bE^d_{n,j}\xrightarrow{\sim}\Omega^{j+1}V^t_{n,j}$ implies $H_*(\bE^d_{n,j};\Q )\cong H_*(\Omega^{j+1}V^t_{n,j};\Q )$ and this completes the proof.
\end{proof}

\begin{rem}\label{rem:action}
One of the reasons why we chose the conjugation in the proof of Corollary~\ref{cor:BV} is that it seems meaningful from the viewpoint of immersions;
for example, we can define an action $SO(j)\times V^*_{n,j}\to V^*_{n,j}$ ($*=d,t$) as the conjugation $(g,f)\mapsto (g\oplus\id_{\R^{n-j}})\circ f\circ g^{-1}$, here $V^*_{n,j}$ is regarded as the space of germs at $\zero$ of (smooth or topological) embeddings $(\R^j,\zero )\hookrightarrow (\R^n,\zero )$.
Under the Smale-Hirsch/Lees equivalence $I^*_{n,j}\xrightarrow{\sim}\Omega^jV^*_{n,j}$, the induced action of $SO(j)$ on $\Omega^j V^*_{n,j}$ by conjugation corresponds to the natural conjugation action of $SO(j)$ on long immersions $I^*_{n,j}$.
This action seems meaningful since it would produce new immersions via ``spinning'', and can be used for the proof of Corollary~\ref{cor:BV_unframed}.
However in the proof of Corollary~\ref{cor:BV} we adopted $SO(n-j)$ instead of $SO(j)$ as the space acting on $V_{n,j}^*$, because the action explained here unfortunately does not extend to that of $SO(j+1)$. 
But the action in Corollary~\ref{cor:BV} may also be meaningful, because it ``rotates'' embeddings in the orthonormal direction $\{\zero\}^j\times\R^{n-j}$ and looks similar to the ``Gramain cycle'' \cite{Gramain77, Budney10}.
At present the author does not know whether the action given in the proof of Corollary~\ref{cor:BV} yields a nontrivial BV-operation on $H_*(\bE^d_{n,j})$, nor whether there are other significant actions.
\end{rem}

\begin{proof}[Proof of Corollary~\ref{cor:BV_unframed}]
The proof is similar to that of Corollary~\ref{cor:BV}, but in this case, as explained in Remark~\ref{rem:action}, we may use the conjugation $SO(j)\times V^*_{n,j}\to V^*_{n,j}$ given by $(g,f)\mapsto (g\oplus\id_{\R^{n-j}})\circ f\circ g^{-1}$ ($*=d,t$) which preserves the basepoints $[\id_{\R^n}]\in V^*_{n,j}$ (this action requires no dimension assumptions).
Since $V^d_{n,j}\to V^t_{n,j}$ is $SO(j)$-equivariant under the conjugation, $SO(j)$ also acts on the homotopy fiber $V^{t/d}_{n,j}$ preserving the basepoint, the constant path at $[\id_{\R^n}]\in V^t_{n,j}$.
The weak equivalence $E^d_{n,j}\xrightarrow{\sim}\Omega^jV^{t/d}_{n,j}$ induces an isomorphism on homology.
\end{proof}

\begin{proof}[Proof of Corollary~\ref{cor:spinning}]
Using Haefliger-Millett's theorem \cite{Millett75}, Lashof proved in \cite[Proposition~({\it t/d})]{Lashof76} that, when $n-j\ge 3$ and $m\le 2n-j-5$, there is an isomorphism
\begin{equation}\label{eq:HaefligerMillett}
 \pi_m(V^{t/d}_{n,j})\cong\pi_{m+1}(G,O,G_{n-j})
\end{equation}
where $G_q$ is the space of degree one maps $S^{q-1}\to S^{q-1}$ and $G$ is its stable suspension (see \cite{Haefliger66}).
On the other hand, by Haefliger's classification theorem \cite{Haefliger66},
\begin{equation}\label{eq:Haefliger}
 \pi_{m+1}(G,O,G_q)\cong\pi_0E^d_{m+q,m}
\end{equation}
for $q\ge 3$.
Proposition~\ref{prop:deloop_unframed} and the isomorphisms \eqref{eq:HaefligerMillett} and \eqref{eq:Haefliger} deduce
\[
 \pi_{m-j}E^d_{n,j}\stackrel{\text{Prop }\ref{prop:deloop_unframed}}{\cong}
 \pi_mV^{t/d}_{n,j}\stackrel{\eqref{eq:HaefligerMillett}}{\cong}
 \pi_{m+1}(G,O,G_{n-j})\stackrel{\eqref{eq:Haefliger}}{\cong}\pi_0E^d_{n-j+m,m}
\]
for $n-j\ge 3$ and $j\le m\le 2n-j-5$.
Putting $k=m-j$ completes the proof.
\end{proof}

\section{Questions}\label{s:Q}
In \cite{Budney03, BudneyCohen05} the space $E^d_{3,1}$ is proved to be a free $\scrC_2$-object, and hence $H_*(E^d_{3,1};\Q )$ is a free Poisson algebra \cite{Cohen533}.
An analogous result for $\bE^d_{n,j}$ for general $n,j$ ($n-j\ge 3$) would be derived if the answer of the following question is affirmative.

\begin{question}\label{q1}
Is $V^t_{n,j}$ a $(j+1)$-fold suspension?
\end{question}

Salvatore proved in \cite{Salvatore06} that $E^d_{n,1}$ ($n\ge 4$) is weakly equivalent to a double loop space.
The following question asks whether the similar result holds for general $n,j$.

\begin{question}\label{q2}
Is $V^{t/d}_{n,j}$ a based loop space with any $(j+1)$-fold loop map $\Omega^jV^{t/d}_{n,j}\to\Omega^{j+1}V^t_{n,j}$?
\end{question}

\begin{question}\label{q3}
How do the BV-structures of Corollary~\ref{cor:BV} and of \cite{K10} relate to each other?
Do they produce any new operation other than cycles of Gramain \cite{Gramain77} and Hatcher \cite{Hatcher99}?
\end{question}


\begin{thebibliography}{10}
\bibitem{AroneLambrechtsVolic07}
G.~Arone, P.~Lambrechts and I.~Voli\'c,
{\em Calculus of functors, operad formality, and rational homology of embedding spaces},
Acta Math.\ \textbf{199} (2007), no.~2, 153--198.

\bibitem{Budney03}
R.~Budney,
\emph{Little cubes and long knots},
Topology \textbf{46} (2007), 1--27.

\bibitem{Budney08}
\bysame,
\emph{A family of embedding spaces},
Geom.\ Topol.\ Monogr.\ \textbf{13} (2008), 41--84.

\bibitem{Budney10}
\bysame,
{\em Topology of spaces of knots in dimension $3$},
Proc.\ Lond.\ Math.\ Soc.\ (3) \textbf{101} (2010), no.~2, 477-496.

\bibitem{BudneyCohen05}
\bysame{} and F.~Cohen,
\emph{On the homology of the space of knots},
Geom.\ Topol.\ \textbf{13} (2009), 99--139.

\bibitem{BurgheleaLashof74-1}
D.~Burghelea and R.~Lashof,
\emph{The homotopy type of the space of diffeomorphisms I},
Trans.\ Amer.\ Math.\ Soc.\ \textbf{196} (1974), 1--36.

\bibitem{Cernavskii69}
A.~{\v{C}}ernavski{\u\i},
\emph{Topological embeddings of manifolds},
Dokl.\ Akad.\ Nauk SSSR \textbf{187} (1969), 1247--1250.

\bibitem{Cohen533}
F.~Cohen,
\emph{The homology of $\scrC_{n+1}$-space, $n\ge 0$},
Lecture Notes in Math.\ vol.\ 533, pp.~207--351,
Springer Verlag, Berlin, 1976.

\bibitem{DwyerHess10}
W.~Dwyer and K.~Hess,
\emph{Long knots and maps between operads},
Geom.\ Topol.\ \textbf{16} (2012), no.~2, 919--955. 

\bibitem{Gleason50}
A.~Gleason,
\emph{Spaces with a compact Lie group of transformations},
Proc.\ Amer.\ Math.\ Soc.\ \textbf{1} (1950), 35--43.

\bibitem{Gramain77}
A.~Gramain,
{\em Sur le groupe fundamental de l'espace des noeuds},
Ann.\ Inst.\ Fourier (Grenoble) \textbf{27} (1977), no.~3, ix, 29--44.

\bibitem{Haefliger66}
A.~Haefliger,
\emph{Differential embeddings of {$S\sp{n}$} in {$S\sp{n+q}$} for {$q>2$}},
Ann.\ of Math.\ (2) \textbf{83} (1966), 402--436.

\bibitem{HaefligerPoenaru64}
\bysame{} and V.~Poenaru,
\emph{La classification des immersions combinatoires},
Inst.\ Hautes {\'E}tudes Sci.\ Publ.\ Math.\ \textbf{23} (1964), 75--91.

\bibitem{Hatcher99}
A.~Hatcher,
\emph{Topological moduli spaces of knots},
preprint, available at
\url{http://www.math.cornell.edu/~hatcher/Papers/knotspaces.pdf}

\bibitem{LambrechtsTurchinVolic10}
P. Lambrechts, V. Turchin and I.~Voli\'c,
{\em The rational homology of spaces of long knots in codimension $>2$},
Geom.\ Topol.\ \textbf{14} (2010), 2151--2187.

\bibitem{Lashof76}
R.~Lashof,
\emph{Embedding spaces},
Illinois J.\ Math.\ \textbf{20} (1976), no.~1, 144--154.

\bibitem{Lees69}
J.~A. Lees,
\emph{Immersions and surgeries of topological manifolds},
Bull.\ Amer.\ Math.\ Soc.\ \textbf{75} (1969), 529--534.

\bibitem{Longoni04}
R.~Longoni,
{\em Nontrivial classes in $H^*(\text{Imb}\, (S^1 ,\R^n ))$ from nontrivalent graph cocycles},
Int.\ J.\ Geom.\ Methods Mod.\ Phys.\ \textbf{1} (2004), no~5, 639--650.

\bibitem{May271}
J.~P.~May,
\emph{The geometry of iterated loop spaces},
Lecture Notes in Math.\ vol.\ 271, Springer Verlag, Berlin, 1972.

\bibitem{Millett75}
K.~Millett,
\emph{Piecewise linear embeddings of manifolds},
Illinois J.\ Math.\ \textbf{19} (1975), 354--369.

\bibitem{Morlet69}
C.~Morlet,
\emph{Isotopie et pseudo-isotopie},
C.\ R.\ Acad.\ Sci.\ Paris S\'er.\ A-B \textbf{268} (1969), A1080--A1082.

\bibitem{K08}
K.~Sakai,
{\em Nontrivalent graph cocycle and cohomology of the long knot space},
Algebr.\ Geom.\ Topol.\ \textbf{8} (2008), 1499--1522.

\bibitem{K09}
\bysame,
{\em An integral expression of the first non-trivial one-cocycle of the space of long knots in $\R^3$},
Pac.\ J.\ Math.\ \textbf{250} no.~2 (2011), 407--419.

\bibitem{K10}
\bysame,
\emph{BV-structures on the homology of the framed long knot space},
arXiv:{}1110.{}2358.

\bibitem{Salvatore06}
P.~Salvatore,
\emph{Knots, operads and double loop spaces},
Int.\ J.\ Res.\ Not.\ (2006).

\bibitem{SalvatoreWahl03}
\bysame{} and N.~Wahl,
\emph{Framed discs operads and Batalin-Vilkovisky algebras},
Quart.\ J.\ Math.\ \textbf{54} (2003), 213--231.

\bibitem{Sinha04}
D.~Sinha,
\emph{Operads and knot spaces},
J.\ Amer.\ Math.\ Soc.\ \textbf{19} (2006), no.~2, 461--486.

\bibitem{Tourtchine10}
V.~Tourtchine,
\emph{Delooping totalization of a multiplicative operad},
to appear in Journal of Homotopy and Related Structures, arXiv:{}1012.{}5957.

\bibitem{Turchin10}
V.~Turchin (Tourtchine),
\emph{Hodge-type decomposition in the homology of long knots},
J.\ Topol.\ \textbf{3} (2010), 487--534.
\end{thebibliography}
\providecommand{\bysame}{\leavevmode\hbox to3em{\hrulefill}\thinspace}
\providecommand{\MR}{\relax\ifhmode\unskip\space\fi MR }
\providecommand{\MRhref}[2]{%
  \href{http://www.ams.org/mathscinet-getitem?mr=#1}{#2}
}
\providecommand{\href}[2]{#2}
\end{document}